\newcommand*{\Scale}[2][4]{\scalebox{#1}{$#2$}}
\newcommand{\Hilbert}{Hilbert }
\newcommand{\Hil}{\mathscr{H}}
\newcommand{\C}{\mathbb{C}}
\newcommand{\Hermitian}{self-adjoint }
\newcommand{\GG}{G}
\newcommand{\PP}{\widehat{P}}
\newcommand{\LL}{L}
\newcommand{\II}{\widehat{I}}
\newcommand{\GA}{\mathit{\Gamma}}
\newcommand{\SI}{\mathit{\Sigma}}
\newcommand{\HH}{H}
\newcommand{\KS}{K}
\newcommand{\Ks}{\mathrm{K}}
\newcommand{\Gg}{\mathrm{G}}
\newcommand{\Pp}{\widehat{\mathrm{P}}}
\newcommand{\Ll}{\mathrm{L}}
\newcommand{\Ii}{\mathrm{I}}
\newcommand{\Ga}{\Gamma}
\newcommand{\Hh}{\mathrm{H}}
\newcommand{\Vv}{\mathrm{V}}
\newcommand{\La}{\Lambda}
\newcommand{\scale}{.54}
\newcommand{\be}{\begin{align}}
\newcommand{\ee}{\end{align}}
\newcommand{\HE}{Namie of Handling Editor}
\newcommand{\DoS}{Month/Day/Year}
\newcommand{\DoA}{Month/Day/Year}
\newcommand{\CA}{Name of Corresponding Author}
\newcommand{\Names}{Matteo Polettini}
\newcommand{\Title}{Oblique projections on metric spaces}
\begin{document}

\bibliographystyle{plain}

\setcounter{page}{1}

\thispagestyle{empty}

 \title{\Title\thanks{Received
 by the editors on \DoS.
 Accepted for publication on \DoA. 
 Handling Editor: \HE. Corresponding Author: \CA}}

\author{
Matteo Polettini\thanks{University of Luxembourg, Facult\'e des Sciences, de la Technologie et de la Communication
162 A, avenue de la Faiencerie L-1511 Luxembourg (Grand Duchy of Luxembourg)
(matteo.polettini@uni.lu).}}

\markboth{\Names}{\Title}

\maketitle

\begin{abstract}
It is known that complementary oblique projections $P_0 + P_1 = I$ on a \Hilbert space $\Hil$ have the same standard operator norm $\|P_0\| = \|P_1\|$ and the same singular values other than $0$ and $1$. We generalize to \Hilbert spaces endowed with a positive-definite metric $\GG$ on top of the scalar product, proving that the volume elements (pseudodeterminants $\det_+$) of the metrics $\LL_0,\LL_1$ induced by $\GG$ on the complementary oblique subspaces $\Hil_0 \oplus \Hil_1 = \Hil$, and of those $\GA_0,\GA_1$ induced on their algebraic duals, satisfy
\begin{align}
\frac{\det_+ \LL_1}{\det_+ \GA_0} = \frac{\det_+ \LL_0}{\det_+ \GA_1} = {\det}_+ \GG. \nonumber
\end{align}
Furthermore, we break down this result to spectra, proving that operators $\sqrt{\GA_0 \LL_0}$ and $\sqrt{\LL_1 \GA_1}$ have the same singular values other than $0$ and $1$. We connect the former result to a well-known duality of weighted spanning-tree polynomials in graph theory.
\end{abstract}

\begin{keywords}
Oblique projections, singular values, spanning-tree polynomials
\end{keywords}
\begin{AMS}
15A18, 05C31. 
\end{AMS}



\section{Introduction}

Oblique projections and their singular values have been extensively studied in Ref.\,\cite{lewko} and reviewed in Ref.\,\cite{szyld}. In a previous work or ours (see Ref.\,\cite{polettini1}) we employed these ideas to give an algebraic interpretation of the decomposition of the edge space of a graph in the basis of so-called ``cycles'' and ``cocycles''. The main objective of this work is to extend these latter results to arbitrary projection operators, and to graphs that carry positive weights along their edges. Since such weights can be interpreted as a metric, ultimately the generalization presented here is to arbitrary oblique projections on metric \Hilbert spaces.

Following Gower's suggestion \cite{gowers}, we introduce our main results by self-explanatory examples.

\begin{paragraph}{Main results} Consider the matrix
\begin{eqnarray}
\Pp_0 = \left(
\begin{array}{cccc}
 1 & 1 & 0 & 0 \\
 0 & 0 & 0 & 0 \\
 0 & 1 & 0 & 1 \\
 0 & 1 & 0 & 1 \\
\end{array}
\right).
\end{eqnarray}
(The hat denotes operatos, to distinguish them from bilinear forms, see later.)
Clearly $\Pp_0^2 = \Pp_0$ and $\Pp_0 \neq \Pp_0^\dagger$, therefore $\Pp_0$ is an oblique projection. Let $\Pp_1 := \Ii - \Pp_0$ be its complement. Furthermore consider the positive-definite symmetric matrix:
\begin{equation}
\Gg = \left(
\begin{array}{cccc}
 1 & 0 & 0 & 0 \\
 0 & 2 & 1 & 0 \\
 0 & 1 & 2 & 0 \\
 0 & 0 & 0 & 1 \\
\end{array}
\right).
\end{equation}
We define the induced metrics as $\Ll_0 := \Pp_0^\dagger \Gg \Pp_0$, $\Ga_0 := \Pp_0 \Gg^{-1} \Pp_0^\dagger$, $\Ll_1 := \Pp_1^\dagger \Gg \Pp_1$, $\Ga_1 := \Pp_1 \Gg^{-1} \Pp_1^\dagger$ (we omit giving the explicit formulas). Letting $\det_+$ denote the product of the nonvanishing eigenvalues of a matrix, we obtain
\begin{align}
\frac{\det_+ \Ll_1}{\det_+ \Ga_0} = \frac{\det_+ \Ll_0}{\det_+ \Ga_1} = {\det}_+ \Gg = 3. \nonumber
\end{align}
Furthermore, let us define the following matrices:
$\Ks_0 := \Ga_0^{1/2} \Ll_0 \Ga_0^{1/2}$, $\Sigma_0 := \Ll_0^{1/2} \Ga_0 \Ll_0^{1/2}$ , $\Ks_1:=  \Ga_1^{1/2} \Ll_1 \Ga_1^{1/2}$, $\Sigma_1 := \Ll_1^{1/2} \Ga_1 \Ll_1^{1/2}$.
Their spectra coincide, and all positive eigenvalues are larger than $1$:
\begin{align}
\sigma(\Ks_0) = \sigma(\Ks_1) = \sigma(\Sigma_0) = \sigma(\Sigma_1) = (\sim 5.36,\sim 1.31,0,0). \nonumber
\end{align}
\end{paragraph}

\begin{paragraph}{Graph polynomials}

Consider the following oriented graph, with positive weights $g_1,\ldots,g_5$ on the edges:
\begin{align}
\begin{array}{c}\xymatrix{\bullet \ar@{->}[r]^{g_1} & \bullet\ar@{->}[d]^{g_4} \\ \bullet \ar@{->}[u]^{g_5}  \ar@{->}[ur]|-{g_2} &  \bullet \ar@{->}[l]^{g_3} }  \end{array}. \nonumber
\end{align}
There exists a standard procedure (see Appendix of Ref.\,\cite{polettini1}) to introduce a basis for the linear space of oriented cycles and that of oriented cocycles (a cocycle is a minimal set of edges whose removal disconnects the graph into two components) starting from an arbitrary spanning tree. 
One such basis of three cocycles and two cycles is
\begin{equation}
\begin{array}{c}\xymatrix{  \circ \ar@{->}[r]  \ar@{->}[d] & \bullet
		\\ \bullet   \ar@{.}[ur]   & \bullet    \ar@{.}[l]    \ar@{.}[u] }\end{array}
\qquad
\begin{array}{c}\xymatrix{\bullet \ar@{<-}[d]  \ar@{.}[r] & \bullet
		\\  \circ \ar@{->}[ur] \ar@{.}[r] & \circ \ar@{->}[u] }\end{array}
\qquad				
\begin{array}{c}\xymatrix{\bullet  \ar@{.}[d] \ar@{.}[r] & \bullet
		\\ \bullet  \ar@{.}[ur] & \ar@{->}[l] \ar@{->}[u] \circ }\end{array} ;
\qquad
\begin{array}{c}\xymatrix{ \ar@{.}[d] \ar@{.}[r] \bullet & \bullet  \ar@{->}[d] \\ \bullet  \ar@{->}[ur]  & \bullet \ar@{->}[l] }\end{array}
\qquad \begin{array}{c}\xymatrix{ \bullet \ar@{->}[r]  \ar@{<-}[d] & \bullet \\ \bullet  \ar@{<-}[ur] & \bullet \ar@{.}[u] \ar@{.}[l] }\end{array}
\nonumber
\end{equation}
with vector representatives
\begin{equation}
c_1 = \left( \begin{array}{c} 1 \\ 0 \\ 0 \\ 0 \\ -1 \end{array} \right) \quad
c_2 = \left( \begin{array}{c} 0 \\ 1 \\ 0 \\ -1 \\ 1 \end{array} \right) \quad
c_3 = \left( \begin{array}{c} 0 \\ 0 \\ 1 \\ -1 \\ 0 \end{array} \right); \quad
c_4 = \left( \begin{array}{c} 0 \\ 1 \\ 1 \\ 1 \\ 0 \end{array} \right) \quad
c_5 = \left( \begin{array}{c} 1 \\ -1 \\ 0 \\ 0 \\ 1 \end{array} \right). \nonumber
\end{equation}
 
Vectors $c_1$, $c_2$ and $c_3$ span the cocycle space, and vectors $c_4$ and $c_5$ span the cycle space. Letting $e_i$ be the $i$-th  Cartesian vector, representing the $i$-th single edge, in Ref.\,\cite{polettini1} it is shown that $P_0 = \sum_{i =4,5} e_i \otimes c_i$ and  $P_1 = \sum_{i =1,2,3} c_i \otimes e_i$ are complementary projections. 

Letting $\Gg = \mathrm{diag}\{g_i\}_{i = 1}^5$, in the basis $\{e_i\}_{i=1}^5$ the only nonvanishing blocks of the induced metrics read
\begin{align}
\Ll_0 & = [c_4,c_5]^T \Gg [c_4,c_5]
 = \left(\begin{array}{cc} g_2 + g_3 + g_4 & - g_2 \\
-g_2 & g_1 + g_2 + g_5 \end{array}\right). \nonumber
\\
\Ga_0 & = [e_4,e_5]^T \Gg^{-1} [e_4,e_5]
 = \left(\begin{array}{cc} g_4^{-1} & 0 \\
0 & g_5^{-1} \end{array}\right) \nonumber \\
\Ll_1 & = [e_1,e_2,e_3]^T \Gg [e_1,e_2,e_3]
= \left(\begin{array}{ccc} g_1 & 0 & 0 \\ 0 & g_2 & 0 \\ 0 & 0 & g_3 \end{array}\right) \nonumber \\
\Ga_1 & = [c_1,c_2,c_3]^T \Gg^{-1} [c_1,c_2,c_3]
 = \left(\begin{array}{ccc} g_1^{-1} + g_5^{-1} & -g_5^{-1} & 0 \\ - g_5^{-1} & g_2^{-1} + g_4^{-1} + g_5^{-1} & g_4^{-1} \\ 0 & g_4^{-1} & g_4^{-1} + g_3^{-1} \end{array}\right) \nonumber
\end{align}
Matrices $\Ll_0$ and $\Ga_1$ are sometimes called Kirchhoff-Symanzik matrices \cite{nakanishi}. We have that
\begin{align}
\det \Ll_0 & =  g_4 g_5 + g_1 g_3 +  g_5 g_3  + g_1 g_4 +  g_2 g_3 + g_5 g_2 +   g_1 g_2 +  g_4 g_2   \nonumber \\
&  =
\Scale[\scale]{\begin{array}{c}
\xymatrix{\bullet \ar@{..}[r] & \bullet\ar@{-}[d] \\ \bullet \ar@{-}[u] \ar@{..}[ur] &  \bullet \ar@{..}[l] }\end{array}
} + 
\Scale[\scale]{\begin{array}{c}
\xymatrix{\bullet \ar@{-}[r] & \bullet\ar@{..}[d] \\ \bullet \ar@{..}[u]  \ar@{..}[ur] &  \bullet \ar@{-}[l] }\end{array}
}  +
\Scale[\scale]{\begin{array}{c}
\xymatrix{\bullet \ar@{..}[r] & \bullet\ar@{..}[d] \\ \bullet \ar@{-}[u]  \ar@{..}[ur] &  \bullet \ar@{-}[l] }\end{array}
} +
\Scale[\scale]{\begin{array}{c}
\xymatrix{\bullet \ar@{-}[r] & \bullet\ar@{-}[d] \\ \bullet \ar@{..}[u]  \ar@{..}[ur] &  \bullet \ar@{..}[l] }\end{array}
} +
\Scale[\scale]{\begin{array}{c}
\xymatrix{\bullet \ar@{..}[r] & \bullet\ar@{..}[d] \\ \bullet \ar@{..}[u]  \ar@{-}[ur]&  \bullet \ar@{-}[l]}\end{array}
} +
\Scale[\scale]{\begin{array}{c}
\xymatrix{\bullet \ar@{..}[r] & \bullet\ar@{..}[d] \\ \bullet \ar@{-}[u]  \ar@{-}[ur] &  \bullet \ar@{..}[l] }\end{array}
} +
\Scale[\scale]{\begin{array}{c}
\xymatrix{\bullet \ar@{-}[r] & \bullet\ar@{..}[d] \\ \bullet \ar@{..}[u]  \ar@{-}[ur] &  \bullet \ar@{..}[l] }\end{array}
} +
\Scale[\scale]{\begin{array}{c}
\xymatrix{\bullet \ar@{..}[r] & \bullet\ar@{-}[d] \\ \bullet \ar@{..}[u]  \ar@{-}[ur] &  \bullet \ar@{..}[l] }\end{array}
}  
\nonumber \\
& = \sum_{\mathcal{T}} \prod_{e \notin T} g_e \nonumber \\
\det \Ga_1 & =\frac{1}{g_1 g_2 g_3}+ \frac{1}{g_2 g_4 g_5} + \frac{1}{g_1 g_2 g_4} + \frac{1}{g_2 g_3 g_5} + \frac{1}{g_1 g_4 g_5} + \frac{1}{g_1 g_3 g_4} + \frac{1}{g_3 g_4 g_5} +\frac{1}{g_1 g_3 g_5}   \nonumber
\\ 
& = \Scale[\scale]{\begin{array}{c}
\xymatrix{\bullet \ar@{-}[r] & \bullet\ar@{..}[d] \\ \bullet \ar@{..}[u]  \ar@{-}[ur] &  \bullet \ar@{-}[l]} \end{array}
} + 
\Scale[\scale]{\begin{array}{c}
\xymatrix{\bullet \ar@{..}[r] & \bullet\ar@{-}[d]  \\ \bullet \ar@{-}[u]  \ar@{-}[ur] &  \bullet \ar@{..}[l] }\end{array}
} + 
\Scale[\scale]{\begin{array}{c}
\xymatrix{\bullet \ar@{-}[r] & \bullet\ar@{-}[d]  \\ \bullet \ar@{..}[u]  \ar@{-}[ur] &  \bullet \ar@{..}[l] }\end{array}
} +
\Scale[\scale]{\begin{array}{c}
\xymatrix{\bullet \ar@{..}[r] & \bullet\ar@{..}[d] \\ \bullet \ar@{-}[u]  \ar@{-}[ur] &  \bullet \ar@{-}[l] }\end{array}
} +
\Scale[\scale]{\begin{array}{c}
\xymatrix{\bullet \ar@{-}[r] & \bullet\ar@{-}[d] \\ \bullet \ar@{-}[u]  \ar@{..}[ur] &  \bullet \ar@{..}[l] }\end{array}
} +
\Scale[\scale]{\begin{array}{c}
\xymatrix{\bullet \ar@{-}[r] & \bullet\ar@{-}[d]  \\ \bullet \ar@{..}[u]  \ar@{..}[ur] &  \bullet \ar@{-}[l]}\end{array}
} +
\Scale[\scale]{\begin{array}{c}
\xymatrix{\bullet \ar@{..}[r] & \bullet\ar@{-}[d]  \\ \bullet \ar@{-}[u]  \ar@{..}[ur] &  \bullet \ar@{-}[l]}\end{array}
} +
\Scale[\scale]{\begin{array}{c}
\xymatrix{\bullet \ar@{-}[r] & \bullet\ar@{..}[d] \\ \bullet \ar@{-}[u]  \ar@{..}[ur] &  \bullet \ar@{-}[l]}\end{array}
} \nonumber \\
& = \sum_{\mathcal{T}} \prod_{e \in \mathcal{T}} \frac{1}{g_e}.  \nonumber 
\end{align}
In the second line of both expressions we gave a representation of the determinant expansion in terms of spanning trees (weights are intended to be multiplied over solid edges of the diagram), which we compactly resume in the third line in terms of the spanning-tree polynomials found e.g. in Ref.\,\cite[Th.\,3.10]{nakanishi}, where $\mathcal{T}$ ranges over spanning trees. Then the identity ${\det}\; \Ll_0 / {\det}\; \Ga_1  = {\det}\; \Gg$ corresponds to a well-known duality of spanning tree polynomials, see e.g. Ref.\,\cite[ Eq.\,(4.11)]{sokal}.

Furthermore we can specialize this duality to eigenvalues as follows. Let
\begin{align}
\Ks_0 & = \Ga_0^{1/2} \Ll_0 \Ga_0^{1/2} = \left(\begin{array}{cc} 1 + \frac{g_2 + g_3}{g_4}  & - \frac{g_2}{\sqrt{g_4 g_5}} \\
-\frac{g_2}{\sqrt{g_4 g_5}} & 1 + \frac{g_1 + g_2}{g_5} \end{array}\right)  \nonumber \\  
\Sigma_1 & = \Ga_1^{1/2} \Ll_1 \Ga_1^{1/2} =  \left(\begin{array}{ccc} 1 + \frac{g_1}{g_5} & - \frac{\sqrt{g_1 g_2}}{g_5} & 0 \\ -\frac{\sqrt{g_1 g_2}}{g_5} & 1 + \frac{g_2}{g_4} + \frac{g_2}{g_5} & \frac{\sqrt{g_2 g_3}}{g_4} \\ 0 & \frac{\sqrt{g_2 g_3}}{g_4} & 1 + \frac{g_3}{g_4} \end{array}\right). \nonumber
\end{align}
Obviously $\det \Ks_0 = \det \Sigma_1$. It can be checked by direct substitution that the vector $(1/\sqrt{g_1}, 1/\sqrt{g_2},- 1/\sqrt{g_3})^\dagger$
is an eigenvector of $\Sigma_1$ relative to eigenvalue $\lambda=1$. The characteristic polynomials of the two matrices are given by
\begin{align}
\zeta_0(\lambda) & = \lambda^2 - \left(x + 2\right)\lambda +  y  \nonumber \\
\zeta_1(\lambda) & = \lambda^3 - \left(x +3 \right) \lambda^2 + \left(y + x + 2 \right) \lambda - y \nonumber
\end{align}
where
\begin{align}
y & = \frac{g_1 g_2 + g_1 g_3 + g_1 g_4 + g_4 g_2 + g_4 g_5 + g_5 g_2 + g_5 g_3 + g_2 g_3}{g_4 g_5} \nonumber \\
x & = \frac{g_2 g_5 + g_3 g_5 + g_1 g_4 + g_4 g_2}{g_4 g_5} \nonumber
\end{align}
Notice that $\zeta_1(\lambda) = (\lambda -1) \zeta_0(\lambda)$, therefore the two matrices have the same spectrum but for eigenvalue $1$. It can be checked that all other eigenvalues are larger than $1$.

\end{paragraph}

\section{Notation}

We consider a finite $n$-dimensional \Hilbert space $\Hil$ with nondegenerate scalar product $\HH:\Hil\times \Hil \to \C$.  Let $\Hil^\ast$ be its algebraic dual. The action of a 1-form $v^\ast$ on a vector $w$ is denoted $v^\ast[w]$, and, vice versa, vectors act linearly on 1-forms via $w[v^\ast]=v^\ast[w]$,  by virtue of the canonical isomorphism between $\Hil$ and the dual's dual $\Hil^{\ast\ast}$.

A bilinear form $A:\Hil \times \Hil \to \C$ induces a map $A:\Hil\to\Hil^\ast$ (denoted by the same symbol) from the \Hilbert space to its algebraic dual via $Av[\,\cdot\,] := A(\,\cdot\,,v)$. In particular, the scalar product induces a canonical isomorphism between vectors and linear forms, and a scalar product $\HH^{-1}:\Hil^\ast \times \Hil^\ast \to \C$ in the dual space, defined by $\HH^{-1}(\HH v,\HH w) := \HH(v,w)$.

Operators, i.e. linear maps from the \Hilbert space to itself, are adorned by a hat $\widehat{O}:\Hil \to \Hil$. Their adjoints $\widehat{O}^\dagger: \Hil \to \Hil$ are defined by $\HH(v,\widehat{O}w)=: \HH(\widehat{O}^\dagger v,w)$.

Given a bilinear form $A$, one can obtain an operator $\widehat{A} := \HH^{-1}  A: \Hil \to \Hil$. The pseudodeterminant $\det_+ A$ of a bilinear form is defined as the product of the nonvanishing eigenvalues of $\widehat{A}$. Furthermore, if $A$ is \Hermitian  positive-semidefinite, one can define the unique map $\sqrt{\widehat{A}} : \Hil \to \Hil^\ast$ such that $A(v,w) = H^{-1}(\sqrt{\widehat{A}} v, \sqrt{\widehat{A}} w)$.

Operators $\widehat{O}$ and bilinear forms $A$ are denoted by italic uppercase symbols, (the nonvanishing blocks of) their matrix representatives in a preferred basis by the corresponding roman characters $\widehat{\mathrm{O}},\mathrm{A}$.

\section{Setup}

We consider a Hilbert space edowed, on top of the scalar product, with another nondegenerate positive-definite \Hermitian form $G$ that we call the {\it metric}, which also induces an inverse metric $\GG^{-1}$ in the dual space, defined by $\GG(v,w) := \GG^{-1}(\GG v,\GG w)$. The scalar product will then play the role of a ``reference measure'' for the metric.

We now consider a decomposition $\Hil = \Hil_0 \oplus \Hil_1$ into nontrivial complementary  subspaces, with $n_0 =\dim \Hil_0 \neq 0$ and $n_1 = n-n_0$. Let $\PP_1:\Hil \to \Hil$ be a projection with range $\Hil_1$ and kernel $\Hil_0$, neither null nor the identity, and $\PP_0 = \II-\PP_1$ its complement with range $\Hil_0$ and kernel $\Hil_1$. In general, $\PP_0$ and $\PP_1$ are oblique, that is, not self-adjoint. Correspondingly, the dual space is decomposed into $\Hil^\ast = \Hil^\ast_0 \oplus \Hil^\ast_1$, where $\Hil^\ast_0$ is the space of all linear forms that vanish on $\Hil_1$ and $\Hil^\ast_1$ that of linear forms that vanish on $\Hil_0$. Obviously $\dim \Hil^\ast_0 = n_0 = \dim \Hil_0$. We then introduce oblique complementary projections $\PP^\ast_0$ and $\PP^\ast_1 = \II^\ast - \PP_1$ on $\Hil^\ast$, $\II^\ast$ being identity in $\Hil^\ast$. Obviously\footnote{\emph{Proof.} Decomposing $w = \PP_0w+\PP_1w$, one has $\PP_0^\ast v^\ast[w] =  \PP_0^\ast v^\ast[\PP_0 w]$, since by definition $\PP_0^\ast v^\ast$ vanishes on $\Hil_1$. Similarly, decomposing $v^\ast = \PP_0^\ast v^\ast+\PP_1^\ast v^\ast$ and considering that $\Hil^{\ast\ast} = \Hil$, one obtains $\PP_0^\ast v^\ast[w] = \PP_0^\ast v^\ast [\PP_0w] = v^\ast[\PP_0w]$, yielding $\HH(\HH^{-1}v^\ast,\PP_0w)=\HH(\HH^{-1}\PP_0^\ast v^\ast,w)$, $\forall v^\ast \in \Hil^\ast,w\in\Hil$, and the conclusion follows. }
$\PP_{0}^\ast  =  \HH \PP_{0} ^\dagger \HH^{-1}$.

Finally we define the main actors of this work.

\begin{definition}
The metrics $\LL_0,\LL_1:\Hil \times \Hil \to \C$ induced by $\GG$ respectively on $\Hil_0$ and on $\Hil_1$, and those $\GA_0,\GA_1:\Hil^\ast \times \Hil^\ast \to \C$ induced by $\GG^{-1}$ respectively on $\Hil^\ast_0$ and on $\Hil^\ast_1$, are defined by
\begin{align}
\begin{array}{l} \LL_0(v,w) := \GG(\PP_0 v,\PP_0w)  \\ 
\LL_1(v,w) := \GG(\PP_1 v,\PP_1 w)
\end{array}
, \qquad \forall\, v,w \in \Hil ,
\end{align}
and by
\begin{align}
\begin{array}{l} 
\GA_0(v^\ast,w^\ast) := \GG^{-1}(\PP_0^\ast v^\ast,\PP_0^\ast w^\ast) \\
\GA_1(v^\ast,w^\ast) := \GG^{-1}(\PP_1^\ast v^\ast,\PP_1^\ast w^\ast)\end{array}, \qquad \forall\, v^\ast,w^\ast \in \Hil^\ast.
\end{align}
\end{definition}

\begin{definition}
The forms $\KS_0,\KS_1,\SI_0,\SI_1:\Hil^\ast \times \Hil^\ast \to \C$ are defined as
\begin{equation}
\begin{array}{l}
\KS_0(v^\ast,w^\ast) := H^{-1}(\sqrt{\widehat{\LL_0 \GA_0}} \, v^\ast, \sqrt{\widehat{\LL_0 \GA_0}} \, w^\ast)  \\
\KS_1(v^\ast,w^\ast) := H^{-1}(\sqrt{\widehat{\LL_1 \GA_1}} \,v^\ast, \sqrt{\widehat{\LL_1 \GA_1}} \,w^\ast) \\
\SI_1(v^\ast,w^\ast) := H^{-1}(\sqrt{\widehat{\LL_0 \GA_0}}^\dagger \, v^\ast, \sqrt{\widehat{\LL_0\GA_0}}^\dagger \, w^\ast) \\
\SI_0(v^\ast,w^\ast) := H^{-1}(\sqrt{\widehat{\LL_0 \GA_0}}^\dagger \, v^\ast, \sqrt{\widehat{\LL_0\GA_0}}^\dagger \, w^\ast)
\end{array} , \qquad \forall\,  v^\ast,w^\ast \in \Hil^\ast,
\end{equation}
(where it is understood that $\sqrt{\widehat{AB}} = \sqrt{\widehat{A}} \sqrt{\widehat{B}}$).
\end{definition}

The above setup greatly simplifies in an orthonormal basis, in which case $\Hh = \widehat{\Ii} = \mathrm{diag}\,(1,1,\ldots,1)$,  $\dagger$ is matrix transposition and complex conjugation, $\Pp_0^\ast = \Pp_0^\dagger$, $\Pp_1^\ast = \Pp_1^\dagger$. For the induced metrics we have $\Ll_0 = \Pp_0^\dagger \Gg \Pp_0$, $\Ll_1 = \Pp_1^\dagger \Gg \Pp_1$, $\Ga_0 = \Pp_0 \Gg^{-1} \Pp_0^\dagger$, $\Ga_1 = \Pp_1 \Gg^{-1}\Pp_1^\dagger$, and futhermore $\Ks_0 = \Ga_0^{1/2} \Ll_0 \Ga_0^{1/2}$, $\Ks_1 = \Ga_1^{1/2} \Ll_1 \Ga_1^{1/2}$, $\Sigma_0 = \Ll_0^{1/2} \Ga_0 \Ll_0^{1/2}$ and $\Sigma_1 = \Ll_1^{1/2} \Ga_1 \Ll_1^{1/2}$ where the square roots are uniquely defined since Hermitian matrices have unique Hermitian square roots. 
However, it will be convenient to work in a different basis that gives a handy block-structure of matrices, where the scalar product is {\it not} the identity matrix; that's the reason we maintain this level of formality.


\section{Results}

The first theorem establishes a connection between the volume elements of the metrics induced by $\GG$ and $\GG^{-1}$ on the oblique subspaces. The second establishes a relation between the forms $\KS_0$ and $\Sigma_1$, and between $\KS_1$ and $\Sigma_0$ which, as a consequence, implies that they have the same spectra up to the multiplicity of $0$ and $1$ (and therefore the same singular values for operators $\sqrt{\widehat{\GA_0 \LL_0}}$ and $\sqrt{\widehat{\LL_1 \GA_1}}$, but for $0$ and $1$).

\begin{theorem}
The pseudodeterminants of the induced metrics are related by
\begin{align}
\frac{\det_+ \LL_1}{\det_+ \GA_0} = \frac{\det_+ \LL_0}{\det_+ \GA_1} = {\det}_+ \GG. \label{eq:statement}
\end{align}
\end{theorem}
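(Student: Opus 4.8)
The plan is to trivialize the projections by a change of basis and reduce everything to block determinants. First I would choose a basis whose first $n_0$ vectors span $\Hil_0$ and whose last $n_1$ vectors span $\Hil_1$. In such a basis $\PP_0$ and $\PP_1$ become the coordinate block projectors onto the two summands, and since $\PP_i^\ast$ coincides with the transpose (as shown in the excerpt) the dual projectors have the same block shape. Writing the representative $\Gg$ of $\GG$ in $2\times2$ block form with diagonal blocks $\Gg_{00},\Gg_{11}$, and that of $\GG^{-1}$ with diagonal blocks $\Gg^{00},\Gg^{11}$, the defining relations of the induced metrics collapse to single-block matrices: $\LL_0,\LL_1,\GA_0,\GA_1$ carry only the block $\Gg_{00},\Gg_{11},\Gg^{00},\Gg^{11}$ respectively, all other entries vanishing. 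The one point requiring care is that $\det_+$ is defined through the scalar product $\HH$, whose representative $\Hh$ is generally not the identity in this basis; in particular $\det_+\GG=\det\widehat{\GG}=\det\Gg/\det\Hh$, so the right-hand side of \eqref{eq:statement} is not simply $\det\Gg$.

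Next I would compute each pseudodeterminant. For the forms on $\Hil$ the relevant operator is $\widehat{\LL_i}=\HH^{-1}\LL_i$, while for the forms on $\Hil^\ast$ it is $\widehat{\GA_i}=\HH\GA_i$, i.e.\ the composition with the inverse $\HH$ of the dual scalar product $\HH^{-1}$. Because each of $\LL_i,\GA_i$ has a single nonzero block, left-multiplication by $\Hh^{-1}$ (resp.\ $\Hh$) produces a block-triangular matrix whose characteristic polynomial factors, e.g.\ $\lambda^{n_1}\det(\lambda-\Hh^{00}\Gg_{00})$ for $\widehat{\LL_0}$, and analogously for the other three. Since the diagonal blocks of $\Hh^{-1}$ and of $\Gg$ are invertible, the nonvanishing spectrum of such a product is read off from the single nonzero diagonal block, giving $\det_+\LL_0=\det\Hh^{00}\,\det\Gg_{00}$, $\det_+\LL_1=\det\Hh^{11}\,\det\Gg_{11}$, $\det_+\GA_0=\det\Hh_{00}\,\det\Gg^{00}$ and $\det_+\GA_1=\det\Hh_{11}\,\det\Gg^{11}$.

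Finally I would invoke Jacobi's identity for complementary minors of an inverse matrix, which yields $\det\Hh^{00}=\det\Hh_{11}/\det\Hh$ and $\det\Gg^{11}=\det\Gg_{00}/\det\Gg$, together with the two analogous relations $\det\Hh^{11}=\det\Hh_{00}/\det\Hh$ and $\det\Gg^{00}=\det\Gg_{11}/\det\Gg$ obtained by exchanging the roles of the two blocks. Substituting into $\det_+\LL_0/\det_+\GA_1$, the factors $\det\Hh_{11}$ and $\det\Gg_{00}$ cancel and leave $\det\Gg/\det\Hh$; the same cancellation in $\det_+\LL_1/\det_+\GA_0$ (now involving $\det\Hh_{00}$ and $\det\Gg_{11}$) produces the identical value. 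Since $\det\Gg/\det\Hh=\det_+\GG$, both ratios equal $\det_+\GG$, which is precisely \eqref{eq:statement}.

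The main obstacle is conceptual rather than computational: keeping the scalar product $\HH$ explicit throughout, since it enters every pseudodeterminant yet cancels only in the final ratios. Concretely, one must resist setting $\Hh=\widehat{\Ii}$, which is incompatible with block-diagonal projectors whenever $\Hil_0$ and $\Hil_1$ are not $\HH$-orthogonal; one must use $\HH$ rather than $\HH^{-1}$ when passing to operators on the dual; and one must recognize that the nonvanishing spectrum of a product of a full-rank and a rank-deficient matrix is governed by a single diagonal block of the resulting block-triangular matrix. Once these points are handled, the statement is a short consequence of Jacobi's complementary-minor identity.
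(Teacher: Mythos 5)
Your proposal is correct and follows essentially the same route as the paper: an adapted (block) basis in which each induced metric has a single nonvanishing block, identification of the nonzero spectrum of the associated operators from the resulting block-triangular matrices, and the Schur-complement/Jacobi complementary-minor identities to relate the blocks of $\Gg$, $\Hh$ and their inverses. The only difference is bookkeeping: the paper normalizes the diagonal blocks of $\Hh$ to the identity and phrases Jacobi's identity through the overlap matrix $\Omega$ and the block-inverse relations, whereas you keep $\Hh$ general and apply the identity to $\Hh$ and $\Gg$ separately, which is marginally cleaner but not a different argument.
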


\begin{proof}
Let $(v_i,w_j)$, be a basis for $\Hil$ such that the first $i=1,\ldots,n_0$ vectors are a basis for $\Hil_0$ and the last $j=n_0+1,\ldots,n$  are a basis for $\Hil_1$. We denote by $u_i$ a generic vector in this basis. We choose as dual basis the one $(v^\ast_i,w^\ast_j)$ defined by $u^\ast_i [u_j] = \delta_{ij}$, with $u = v,w$. In general, this basis cannot be orthonormal. However, it is possible to choose a basis (that we call {\it natural}) such that the first $n_0$ vectors are orthonormal among themselves and the last $n_1$ vectors are orthonormal among themselves, i.e. $H(v_i,v_j)= H(w_i,w_j) = \delta_{ij}$, while in general $\Omega_{ij} := H(v_i,w_j) \neq 0$.

Let us consider $\widehat{\GG} = \HH^{-1} \GG$ and $\widehat{\GG}^{-1} = \GG^{-1} \HH = \GG^{\ast} \HH$, which in the natural basis read
\begin{equation}
\begin{aligned}
\widehat{\Gg} & = \left(\begin{array}{cc} \Ii & \Omega^\dagger \\ \Omega &  \Ii  \end{array}\right)^{\!\!-1}  \left(\begin{array}{cc} \Ll_0 & \Vv^\dagger \\ \Vv &  \Ll_1  \end{array}\right) \label{eq:gg1} \\
\widehat{\Gg}^{-1} & = \left(\begin{array}{cc} \Ga_0 & \La^\dagger  \\ \La & \Ga_1 \end{array}\right) \left(\begin{array}{cc} \Ii & \Omega^\dagger \\ \Omega &  \Ii  \end{array}\right)
\end{aligned}
\end{equation}
where $\Ll_0,\Ll_1,\Ga_0,\Ga_1$ are the nonvanishing  \Hermitian square blocks of the matrix representatives of $\LL_0,\LL_1,\GA_0,\GA_1$, and $\Vv_{ij} := G(v_i,w_j)$, $\La_{ij} := G^\ast(v^\ast_i,w^\ast_j)$. From properties of inverses and determinants of partitioned block matrices \cite{bierens} we obtain
\begin{equation}
\begin{aligned}\label{eq:multi}
\Ga_0^{-1} & = \Ll_0  - \Vv^\dagger \Ll_1^{-1} \Vv \\
\Ga_1^{-1} & = \Ll_1  - \Vv \Ll_0^{-1} \Vv^\dagger   \\
\Ll_0^{-1} & = \Ga_0 - \La^\dagger \Ga_1^{-1} \La  \\
\Ll_1^{-1} & = \Ga_1 - \La \Ga_0^{-1} \La^\dagger 
\end{aligned}
\end{equation}
and the following expressions for the determinants
\begin{equation}
\begin{aligned}
\det\, \widehat{\Gg}  & =
\frac{\det\,  \Ll_0 }{\det\,  (\Ii - \Omega\Omega^\dagger ) \,  \det\,  \Ga_1} \\
\det\,  \widehat{\Gg}^{-1} & =
\frac{\det\, (\Ii - \Omega\Omega^\dagger ) \, \det\,  \Ga_0}{ \det \,   \Ll_1}.
\end{aligned}
\end{equation}
Finally we want to relate these matrix determinants to operator determinants. Clearly ${\det}_+ \GG = {\det}\,  \widehat{\Gg} $. As regards $\det_+ \LL_0$, we need to consider the eigenvalues of the operator $\widehat{\LL}_0 = \GG^{-1}  \LL_0$ which has matrix form
\begin{align}
\widehat{\Ll}_0 = \left(\begin{array}{cc} \Ii & \Omega^\dagger \\ \Omega &  \Ii  \end{array}\right)^{\!\!-1} \left(\begin{array}{cc} \Ll_0 & 0 \\ 0 & 0 \end{array}\right) =  \left(\begin{array}{cc} (\Ii - \Omega^\dagger \Omega)^{-1}  \Ll_0 & 0 \\ - (\Ii - \Omega \Omega^\dagger)^{-1} \Omega  \Ll_0 &0 \end{array}\right).  
\end{align}
The eigenvalue equation $\widehat{\LL}_0 v = \lambda v$ in the natural basis reads
\begin{align}
\left(\begin{array}{c} (\Ii - \Omega^\dagger \Omega)^{-1}  \Ll_0 v_0 \\ - (\Ii - \Omega \Omega^\dagger)^{-1} \Omega  \Ll_0 v_0 \end{array}\right) = \lambda \left(\begin{array}{c} v_0 \\ v_1 \end{array} \right).
\end{align}
The null eigenvalue corresponds to eigenvectors with $v_0 = 0$. For nonvanishing eigenvalues, the first equation yields the reduced eigenvalue equation $(\Ii - \Omega^\dagger \Omega)^{-1}  \Ll_0 v_0 = \lambda  v_0$ and the second returns the second part of the eigenvector as $v_1 =  - (\Ii - \Omega \Omega^\dagger)^{-1} \Omega (\Ii - \Omega \Omega^\dagger)v_0$. Consequently, we obtain ${\det}_+ \LL_0 = {\det}\,  \Ll_0  / {\det}\, (\Ii - \Omega^\dagger \Omega )$. By a similar reasoning, since
\begin{align}
\widehat{\Ga}_1 = \left(\begin{array}{cc} 0 & 0 \\ 0 & \Ga_1 \end{array}\right) \left(\begin{array}{cc} \Ii & \Omega^\dagger \\ \Omega &  \Ii  \end{array}\right) = \left(\begin{array}{cc} 0 & 0 \\  \Ga_1  \Omega &  \Ga_1 \end{array}\right), \label{eq:opga}
\end{align}
we obtain ${\det}_+\, \GA_1 = {\det}\, \Ga_1$, which completes the proof.
\end{proof}

Since the determinant of a metric $G$ is its volume element (measured in units of the volume element of $H$), then the above theorem establishes a fundamental symmetry between volume elements induced by the identification of nonorthogonal subspaces of a metric \Hilbert space.

When the metric and the scalar product coincide, $\GG = \HH$, it is well known \cite{szyld} that for complementary oblique projections
\begin{align}
\|\PP_0 \| = \|\PP_1 \|, 
\end{align} 
with the standard operator norm induced by the vector norm, and coinciding with the modulus of the largest singular value. This norm identity is actually a corollary of an even stronger result, since it can be proven that $\PP_0$ and of $\PP_1$ have the same singular values (eigenvalues of $\PP_0^\dagger \PP_0$ and $\PP_1 \PP_1^\dagger$, up to the multiplicity (possibly vanishing) of $0$ and $1$ \cite{lewko}. It is the objective of the next theorem to prove an analogous result in our generalized context. 

\begin{theorem}
The forms $\KS_0$, $\KS_1$, $\SI_0$ and $\SI_1$ have the same spectrum, up to the multiplicity of eigenvalues $0$ and $1$. All nonvanishing eigenvalues are not smaller than $1$.
\end{theorem}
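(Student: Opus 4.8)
The plan is to show that, as far as their nonvanishing eigenvalues are concerned, the four forms reduce to the two block products $\Ll_0\Ga_0$ and $\Ll_1\Ga_1$, and then to prove that these two share a common spectrum up to the eigenvalue $1$. Concretely, I would first compute in an orthonormal basis, where the four forms are represented by the Hermitian matrices $\Ks_0 = \Ga_0^{1/2}\Ll_0\Ga_0^{1/2}$, $\Sigma_0 = \Ll_0^{1/2}\Ga_0\Ll_0^{1/2}$, $\Ks_1 = \Ga_1^{1/2}\Ll_1\Ga_1^{1/2}$ and $\Sigma_1 = \Ll_1^{1/2}\Ga_1\Ll_1^{1/2}$. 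Since $XY$ and $YX$ always carry the same nonzero eigenvalues, each of $\Ks_0,\Sigma_0$ has the nonvanishing spectrum of $\Ll_0\Ga_0$ and each of $\Ks_1,\Sigma_1$ that of $\Ll_1\Ga_1$; this collapses the four spectra to two and reduces the statement to comparing the $n_0\times n_0$ matrix $\Ll_0\Ga_0$ with the $n_1\times n_1$ matrix $\Ll_1\Ga_1$, the extra zeros being absorbed into the ``up to multiplicity of $0$'' clause. Because the two sizes differ, one can at best hope for agreement up to the multiplicity of a single distinguished eigenvalue, which will turn out to be $1$.

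Next I would return to the natural basis of the proof of Theorem 1, where $\GG$ and $\GG^{-1}$ are represented by the two mutually inverse block matrices appearing in \eqref{eq:gg1}. Since those block matrices multiply to $\Ii$, reading off the diagonal blocks gives $\Ll_0\Ga_0 = \Ii - \Vv^\dagger\La$ and $\Ll_1\Ga_1 = \Ii - \Vv\La^\dagger$, while the vanishing of an off-diagonal block, $\Vv\Ga_0 + \Ll_1\La = 0$, yields the cross relation $\La = -\Ll_1^{-1}\Vv\Ga_0$. Substituting the latter gives $\Vv^\dagger\La = XY$ and $\Vv\La^\dagger = YX$ with $X := \Vv^\dagger\Ll_1^{-1}$ and $Y := -\Vv\Ga_0$, so that $\Vv^\dagger\La$ and $\Vv\La^\dagger$ have identical nonzero eigenvalues; subtracting from $\Ii$ then shows that $\Ll_0\Ga_0$ and $\Ll_1\Ga_1$ differ only in how many times the eigenvalue $1$ occurs (the surplus zero eigenvalues of $XY$ versus $YX$ becoming ones after the shift). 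The pair $X,Y$ is the intertwiner that realizes the announced ``supersymmetry''.

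For the lower bound, recall that the nonvanishing eigenvalues in question are those of $\Ll_0\Ga_0$, equivalently those of the Hermitian matrix $\Ga_0^{1/2}\Ll_0\Ga_0^{1/2}$ (the square block $\Ga_0$ being invertible). Using the Schur-complement identity $\Ll_0 = \Ga_0^{-1} + \Vv^\dagger\Ll_1^{-1}\Vv$, the first line of \eqref{eq:multi}, this matrix equals $\Ii + (\Vv\Ga_0^{1/2})^\dagger\,\Ll_1^{-1}\,(\Vv\Ga_0^{1/2})$. Since $\GG$ is positive-definite its diagonal block $\Ll_1$, hence $\Ll_1^{-1}$, is positive-definite, so the second summand is positive-semidefinite and the matrix is $\succeq\Ii$; thus every nonvanishing eigenvalue is at least $1$, and the same computation bounds the other three forms.

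The step I expect to be the main obstacle is the heart of the second paragraph: \emph{a priori} the products $\Vv^\dagger\La$ (of size $n_0$) and $\Vv\La^\dagger$ (of size $n_1$) are unrelated and, for generic off-diagonal blocks, not cospectral. What makes the argument go through is that $\La$ is not independent of $\Vv$: the inverse relationship between $\GG$ and $\GG^{-1}$ ties them by $\La = -\Ll_1^{-1}\Vv\Ga_0$, and it is precisely this identity that recasts the two products as $XY$ and $YX$. Along the way I would verify that the square blocks $\Ll_0,\Ll_1,\Ga_0,\Ga_1$ are invertible, so that the square roots and Schur complements are legitimate — this follows from positive-definiteness of $\GG$ — and I would track the multiplicities of $0$ and $1$ carefully, so that the final ``up to multiplicity'' statement reflects exactly the size difference $|n_0-n_1|$ between the two nonvanishing blocks.
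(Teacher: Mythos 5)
Your proposal is correct and follows essentially the same route as the paper's proof: reduce the four forms to the nonvanishing blocks $\Ga_0^{1/2}\Ll_0\Ga_0^{1/2}$, $\Ll_0^{1/2}\Ga_0\Ll_0^{1/2}$, etc., use the mutual inverseness of the block representatives of $\GG$ and $\GG^{-1}$ in the natural basis to exhibit the two relevant blocks as $\Ii$ plus a rectangular matrix times its ``reverse,'' and conclude by the $XY$-versus-$YX$ cospectrality together with positive-semidefiniteness for the bound $\geq 1$. The only difference is cosmetic: where you read $\Ll_0\Ga_0=\Ii+\Vv^\dagger\Ll_1^{-1}\Vv\Ga_0$ directly off the block identity $\bigl(\begin{smallmatrix}\Ll_0&\Vv^\dagger\\ \Vv&\Ll_1\end{smallmatrix}\bigr)\bigl(\begin{smallmatrix}\Ga_0&\La^\dagger\\ \La&\Ga_1\end{smallmatrix}\bigr)=\Ii$, the paper arrives at the equivalent statements $\Ks_0=\Ii+\mathrm{D}\mathrm{D}^\dagger$ and $\Sigma_1=\Ii+\mathrm{D}^\dagger\mathrm{D}$ by solving a short recursion built from the Schur-complement relations, Eqs.\,(\ref{eq:multi}).
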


\begin{proof}

In the first part of the proof we will show, as intuitive, that it all boils down to considering the eigenvalues of the matrices $\Ll_0^{1/2} \Ga_0 \Ll_0^{1/2}$, $\Ll_1^{1/2} \Ga_1 \Ll_1^{1/2}$, $\Ga_0^{1/2} \Ll_0 \Ga_0^{1/2}$, and $\Ga_1^{1/2} \Ll_1 \Ga_1^{1/2}$, in the second part we derive the result.

Let us first find the matrix representatives of $\sqrt{\widehat{\LL_0}}$, $\sqrt{\widehat{\GA_0}}$, $\sqrt{\widehat{\LL_1}}$ and $\sqrt{\widehat{\GA_1}}$ in the natural basis. We have by definition
\begin{equation}
\begin{aligned}
\left(\begin{array}{cc} \Ll_0 & 0 \\ 0 & 0 \end{array}\right) & = \left(\begin{array}{cc} \sqrt{\widehat{\Ll_0}}^\dagger & 0 \\ 0 & 0 \end{array}\right) 
 \left(\begin{array}{cc} \Ii & \Omega^\dagger \\ \Omega &  \Ii  \end{array}\right)^{\!\!-1}
 \left(\begin{array}{cc} \sqrt{\widehat{\Ll_0}} & 0 \\ 0 & 0 \end{array}\right) \\
 \left(\begin{array}{cc} 0 & 0 \\ 0 & \Ll_1 \end{array}\right) & = \left(\begin{array}{cc} 0 & 0 \\ 0 & \sqrt{\widehat{\Ll_1}}^\dagger \end{array}\right) 
 \left(\begin{array}{cc} \Ii & \Omega^\dagger \\ \Omega &  \Ii  \end{array}\right)^{\!\!-1}
 \left(\begin{array}{cc} 0 & 0 \\ 0 & \sqrt{\widehat{\Ll_1}} \end{array}\right) \\
 \left(\begin{array}{cc} \Ga_0 & 0 \\ 0 & 0 \end{array}\right) & = \left(\begin{array}{cc}  \sqrt{\widehat{\Ga_0}}^\dagger & 0 \\ 0 & 0 \end{array}\right) 
 \left(\begin{array}{cc} \Ii & \Omega^\dagger \\ \Omega &  \Ii  \end{array}\right)
 \left(\begin{array}{cc} \sqrt{\widehat{\Ga_0}} & 0 \\ 0 & 0 \end{array}\right) \\
 \left(\begin{array}{cc} 0 & 0 \\ 0 & \Ga_1 \end{array}\right) & = \left(\begin{array}{cc} 0 & 0 \\ 0 & \sqrt{\widehat{\Ga_1}}^\dagger  \end{array}\right) 
 \left(\begin{array}{cc} \Ii & \Omega^\dagger \\ \Omega &  \Ii  \end{array}\right)
 \left(\begin{array}{cc} 0 & 0 \\ 0 & \sqrt{\widehat{\Ga_1}}  \end{array}\right)
\end{aligned}
\end{equation}
yielding
\begin{equation}
\begin{aligned}
\sqrt{\widehat{\Ll_0}} & = (\Ii - \Omega^\dagger \Omega)^{1/2}   \Ll_0^{1/2} \\
\sqrt{\widehat{\Ll_1}} & = (\Ii - \Omega \Omega^\dagger)^{1/2}   \Ll_1^{1/2} \\
\sqrt{\widehat{\Ga_0}} & =  \Ga_0^{1/2} \\
\sqrt{\widehat{\Ga_1}} & =  \Ga_1^{1/2}.
\end{aligned}
\end{equation}
Imposing
\begin{equation}
\begin{aligned}
\left(\begin{array}{cc} \Ks_0 & 0 \\ 0 & 0 \end{array}\right) & = \left(\begin{array}{cc} \sqrt{\widehat{\Ga_0}}^\dagger   \sqrt{\widehat{\Ll_0}}^\dagger & 0 \\ 0 & 0 \end{array}\right) 
 \left(\begin{array}{cc} \Ii & \Omega^\dagger \\ \Omega &  \Ii  \end{array}\right)^{\!\!-1}
 \left(\begin{array}{cc} \sqrt{\widehat{\Ll_0}} \sqrt{\widehat{\Ga_0}} & 0 \\ 0 & 0 \end{array}\right) \\
 \left(\begin{array}{cc} 0 & 0 \\ 0 & \Ks_1 \end{array}\right) & = \left(\begin{array}{cc} 0 & 0 \\ 0 & \sqrt{\widehat{\Ga_1}}^\dagger \sqrt{\widehat{\Ll_1}}^\dagger    \end{array}\right) 
 \left(\begin{array}{cc} \Ii & \Omega^\dagger \\ \Omega &  \Ii  \end{array}\right)^{\!\!-1}
 \left(\begin{array}{cc} 0 & 0 \\ 0 & \sqrt{\widehat{\Ll_1}}  \sqrt{\widehat{\Ga_1}} \end{array}\right),
\end{aligned}
\end{equation}
we find the intuitive expressions
\begin{equation}
\begin{aligned}
\Ks_0 & = \Ga_0^{1/2} \Ll_0 \Ga_0^{1/2} \\
\Ks_1 & = \Ga_1^{1/2} \Ll_1 \Ga_1^{1/2}.
\end{aligned}
\end{equation}
Since $\widehat{\Ks}_0$ and $\widehat{\Ks}_1$ are defined by equations analogous to Eq.\,(\ref{eq:opga}), then their spectra coincide with those of $\Ks_0$, $\Ks_1$. We can proceed in an analogous way for $\widehat{\Sigma}_0$ and $\widehat{\Sigma}_1$, finding
\begin{equation}
\begin{aligned}
\widehat{\mathrm{\Sigma}}_0 & = (1+\Omega^\dagger \Omega)^{1/2} \Sigma_0 (1+\Omega^\dagger \Omega)^{-1/2} \\
\widehat{\mathrm{\Sigma}}_1 & = (1+\Omega \Omega^\dagger)^{1/2} \Sigma_1 (1+\Omega \Omega^\dagger)^{1/2},
\end{aligned}
\end{equation}
where
\begin{equation}
\begin{aligned}
\Sigma_0 & := \Ll_0^{1/2} \Ga_0 \Ll_0^{1/2} \\
\Sigma_1 & := \Ll_1^{1/2} \Ga_1 \Ll_1^{1/2}.
\end{aligned}
\end{equation}
Clearly $\Sigma_0$ and $\Ks_0$ have the same spectrum up to the multiplicity of eigenvalue $0$, and so do $\Sigma_1 $ and $\Ks_1$.

Now consider the Eqs.(\ref{eq:multi}), that we rewrite as
\begin{equation}\label{eq:lalala}
\begin{aligned}
\Sigma_0^{-1} & = \Ii - \mathrm{B}_0^\dagger \Ks_{1}^{-1} \mathrm{B}_0 \\
\Ks_0^{-1} & = \Ii  - \mathrm{D}_0^\dagger \Sigma_1^{-1}\mathrm{D}_0 \\
\Sigma_1^{-1}& = \Ii - \mathrm{D}_1^\dagger \Ks_0^{-1}\mathrm{D}_1 \\
\Ks_{1}^{-1} & = \Ii - \mathrm{B}_1^\dagger \Sigma_{0}^{-1} \mathrm{B}_1
\end{aligned}
\end{equation}
where
\begin{equation}
\begin{aligned}
\mathrm{B}_0 & := \Ga_1^{1/2} \Vv  \Ll_0^{-1/2}  \\
\mathrm{B}_1 & := \Ll_0^{1/2} \La^\dagger  \Ga_1^{-1/2} \\
\mathrm{D}_0 & := \Ga_0^{1/2} \Vv^\dagger \Ll_1^{-1/2}\\
\mathrm{D}_1 & := \Ll_1^{1/2} \La \Ga_0^{-1/2}
\end{aligned}
\end{equation}
Notice that by taking the product $\widehat{\Gg} \widehat{\Gg}^{-1} = \widehat{\Ii}$ from Eqs.(\ref{eq:gg1}), we obtain with a few simple manipulations 
\begin{equation}
\begin{aligned}
\mathrm{B}_0= -  \mathrm{B}_1^\dagger  =: \mathrm{B} \\
\mathrm{D}_0= -  \mathrm{D}_1^\dagger  =: \mathrm{D}
\end{aligned}
\end{equation}
and furthermore
\begin{align}
\mathrm{D}^\dagger \mathrm{D}  = \sqrt{\widehat{ \Ga_1 \Ll_1}}^{\;-1} \mathrm{B}\mathrm{B}^\dagger  \sqrt{\widehat{ \Ga_1 \Ll_1}}. \label{eq:similarity}
\end{align}
We then obtain from the second and third of Eqs.\,(\ref{eq:lalala})
\begin{equation}
\begin{aligned}
\Ks_0^{-1} & = \Ii  - \mathrm{D}^\dagger \Sigma_1^{-1}\mathrm{D}   \\
\Sigma_1^{-1}& = \Ii - \mathrm{D} \Ks_0^{-1}\mathrm{D}^\dagger .
\end{aligned}
\end{equation}
and similarly for $\Ks_1^{-1}$ and $\Sigma_0^{-1}$ in terms of $\mathrm{B}$. Replacing the first equation into the second, we obtain the recursive relations
\begin{equation}
\begin{aligned}
\Ks_0^{-1} & = \Ii  -  \mathrm{D}  \mathrm{D}^\dagger +  \mathrm{D}  \mathrm{D}^\dagger \Ks_0^{-1} \mathrm{D}  \mathrm{D}^\dagger , \\
\Sigma_1^{-1} & = \Ii  -   \mathrm{D}^\dagger \mathrm{D}  + \mathrm{D}^\dagger   \mathrm{D}  \Sigma_1^{-1} \mathrm{D}^\dagger \mathrm{D}  .
\end{aligned}
\end{equation}
The latter equations are solved by
\begin{equation}\label{eq:kssol}
\begin{aligned}
\Ks_0 & = \Ii + \mathrm{D}\mathrm{D}^\dagger \\
\Sigma_1 & = \Ii +  \mathrm{D}^\dagger \mathrm{D}
\end{aligned}
\end{equation}
as can be found by direct replacement (notice that $\Ks_0$ and $\Sigma_1$ are unquely defined by construction). Letting $v$ be an eigenvector of $\Ks_1$ relative to eigenvalue $\lambda$, we find
\begin{align}
\mathrm{D}^\dagger \mathrm{D} v = (\lambda -1) v.
\end{align}
Since the left-hand side matrix is positive-semidefinite, clearly all eigenvalues of $\Ks_0$ and $\Sigma_1$ must be not smaller than $1$. Acting with $\mathrm{D}$ on the latter expression we obtain that $\mathrm{D} v$ is an eigenvector of $\Ks_0 $ with respect to the same eigenvalue. Let $n_0 \geq n_1$, without loss of generality, and let $r \leq n_1$ be the rank of $\mathrm{D}$. We conclude that $\KS_0$ [resp. $\Sigma_1$] has $r$ positive eigenvalues strictly larger than $1$, eigenvalue $1$ with multiplicity $n_0-r$ [resp. $n_1 - r$] and eigenvalue $0$ with multiplicity $n_1$ [resp. $n_0$], and that the eigenvalues larger than $1$ are the same. Same applies to $\Ks_1$ and $\Sigma_0$ given the similarity Eq.\,(\ref{eq:similarity}).
\end{proof}

\section{Relation to graph polynomials}

Here we report on a relations between the above results and known properties of the spanning-tree polynomial in graph theory. We build upon the results of Ref.\,\cite{polettini1}, to which we refer for further details. 

By the spectral theorem we can pick an orthonormal basis $E = \{e_i\}_{i = 1}^n$ that makes the metric diagonal $\Gg = \mathrm{diag}\{g_i\}_{i =1}^n$. We call such basis the {\it edge space}. The diagonal entries $g_i$ can be seen as positive weights associated to each edge. We introduce the vertex set $X$ of vertices $x$ and a map $\delta : E \to X \times X$ associating an ordered pair of vertices to each edge, one (denoted $\stackrel{e_i}{\rightarrow} x $) being the target and the other (denoted $\stackrel{e_i}{\leftarrow} x$) being the origin of the edge. The choice of which $x$ is the origin and which is the target fixes a completely arbitrary orientation of the graph, on which the results below do not depend. We can make this map into a linear operator $\delta : \Hil \to \Hil_X$ mapping arbitrary linear combinations of edges into the linear space $\Hil_X$ generated by vertices. The operator acts on basis vectors according to
\begin{align}
\delta_{x,i} = \left\{\begin{array}{ll}
+1, & \mathrm{if}\, \stackrel{e_i}{\rightarrow} x \\ 
-1, & \mathrm{if}\, \stackrel{e_i}{\leftarrow} x \\
0, & \mathrm{otherwise}
\end{array}\right.  .
\end{align}
The quadruple $\mathcal{G}= (E,X,\delta,\Gg)$ forms an oriented weighted graph. We assume that the graph is connected, in the sense that for any two complementary subsets of edges $E_1$, $E_2 = E \setminus E_1$, the corresponding sets of boundary vertices intersect (so that there is a path between any two boundary vertices) and their union is $X$ (so that there are no isolated vertex). Under this hypothesis one has  $|X| \leq |E|+1$; the number $|C|= |E|-|X|+1 \geq 0$ is called the {\it cyclomatic number}. We assume (as customary) that the graph does not include loops, that is, edges whose boundary vertices coincide.

We stipulate that $\Hil_0$ is the kernel of $\delta$, called the {\it cycle space}, with dimension $n_0 = |C|$, and that $\Hil_1^\ast$ is the image of $\delta$, called the {\it cocycle space}, with dimension $n_1 = |X|-1$. A basis for the cycle and for the cocycle spaces is found by the following procedure. We pick an arbitrary spanning tree, i.e. a set of $|X|-1$ edges that connects all vertices. Let the {\it cochords} $\{e_\mu\}_{\mu = 1}^{n_1}$  be the (vector representatives) of the edges that belong to the tree, and the {\it chords} $\{e_\alpha\}_{\alpha = n_1+1}^{n}$ be the (co-vector representatives) of the edges that do not belong to the tree. Adding a chord $e_\alpha$ to the spanning tree identifies a unique basis cycle vector $c_\alpha$. Removing a cochord from the spanning tree identifies a unique cocycle co-vector $c_\mu$. One has
\begin{align}
c_\mu [ c_\alpha]= e_\mu[e_\alpha] = 0, \qquad c_\mu[e_{\mu'}] = \delta_{\mu,\mu'}, \qquad c_\alpha[e_{\alpha'}] = \delta_{\alpha,\alpha'}.
\end{align}
The crucial result exposed in Ref.\,\cite{polettini1} is that chords span $\Hil_0^\ast$ and cochords span $\Hil_1$, and more precisely that, letting $\otimes$ be the outer product of a vector and a linear form, the two operators
\begin{equation}
\begin{aligned}
\PP_0 = \sum_{\alpha = 1}^{|C|} c_\alpha \otimes e_\alpha, \qquad
\PP_1 = \sum_{\mu = |C|+1}^{|E|} e_\mu \otimes c_\mu,
\end{aligned}
\end{equation}
are complementary projections, typically obique (except very special cases).

The induced metrics then read
\begin{equation}
\begin{aligned}
\LL_{0} & =  \sum_{\alpha,\alpha' = 1}^{|C|} (\Ll_0)_{\alpha,\alpha'}  e_\alpha \otimes e_{\alpha'}, & 
\LL_{1} & = \sum_{\mu,\mu' = 1}^{|V-1|} (\Ll_0)_{\mu,\mu'}   c_\mu \otimes c_{\mu'} \\
\GA_{0} & = \sum_{\alpha,\alpha' = 1}^{|C|} (\Ga_0)_{\alpha,\alpha'} c_\alpha \otimes c_{\alpha'},  & 
\GA_{1} & = \sum_{\mu,\mu' = 1}^{|V-1|} (\Ga_1)_{\mu,\mu'} e_\mu \otimes e_{\mu'}.
\end{aligned}
\end{equation}
where $ (\Ll_0)_{\alpha,\alpha'}  = G[c_\alpha,c_{\alpha'}]$, $(\Ll_0)_{\mu,\mu'}  =  G[e_\mu,e_{\mu'}]$, $ (\Ga_0)_{\alpha,\alpha'}  =  G^{-1}[e_\alpha,e_{\alpha'}]$, and $ (\Ga_1)_{\mu,\mu'}  = G^{-1}_{\mu,\mu'}[c_\mu,c_{\mu'}]$. That $\det \Ll_1 / \det \Ga_0 = \det \Gg = \prod_i g_i$ is obvious from the fact that they are diagonal matrices covering all the edges. Instead, the determinants of the cycle and cocycle overlap matrices $\Ll_0$ and $\Ga_1$ are well-known to give the spanning tree/cotree polynomials described in the opening example, see Th.\,3.10 in Ref.\,\cite{nakanishi} (see also \cite{marcolli}), and $\det \Ll_0/\det \Ga_1  = \det \Gg = \prod_i g_i$ corresponds to Eq.\,(4.11) in the review paper Ref.\,\cite{sokal}, that relates a the Tutte polynomial of a planar graph and its dual $\mathcal{G}^\ast$.

Now let $e^\ast_i[e_j] = \delta_{i,j}$ and let us introduce the matrix
\begin{align}
\mathrm{D}_{\mu,\alpha} = \sqrt{\frac{g_\mu}{g_\alpha}}\,  c_\mu[e^\ast_\alpha] = - \sqrt{\frac{g_\mu}{g_\alpha}}\,  e^\ast_\mu[c_\alpha].
\end{align}
The second identity follows from \cite[Theorem 3]{polettini1}. 
Finally, we have a particularly simple representation for $\Ks_0$ and $\Sigma_1$:
\begin{equation}
\begin{aligned}
(\Ks_0)_{\alpha,\alpha'} & = \frac{(\Ll_0)_{\alpha,\alpha'}}{\sqrt{g_{\alpha} g_{\alpha'}}} & & = \delta_{\alpha,\alpha'} + \sum_{\mu=1}^{n_1} \mathrm{D}_{\mu,\alpha} \mathrm{D}_{\mu,\alpha'} \\
(\Sigma_1)_{\mu,\mu'} & = \sqrt{g_{\mu} g_{\mu'}} (\Ga_1)_{\mu,\mu'} & & = \delta_{\mu,\mu'} + \sum_{\alpha=n_1+1}^{n} \mathrm{D}_{\mu',\alpha} \mathrm{D}_{\mu,\alpha}.
\end{aligned}
\end{equation}
These latter relations follow from the fact that the only chord belonging to cycle $c_\alpha$ is $e_\alpha$, which is accounted for by the Kroenecker delta, while cochords are enough to span the rest of a cycle and the intersections between two cycles $c_\alpha$ and $c_{\alpha'}$. Similarly for cocycles. We have thus reproduced by direct computation the result Eq.\,(\ref{eq:kssol}).

\bigskip
{\bf Acknowledgment.} The research was supported by the National Research Fund Luxembourg (core project THERMOCOMP) and by the European Research Council (ERC-2015-CoG Agreement No. 681456).


\end{document}